\newcommand{\D}{\mathsf{D}}
\newcommand{\K}{\mathsf{K}}
\newcommand{\C}{\mathsf{C}}
\renewcommand{\S}{S}
\newcommand{\tS}{\mathsf{S}}
\newcommand{\T}{\mathsf{T}}
\newcommand{\Ls}{\mathsf{L}}
\newcommand{\Homs}[3]{\operatorname{Hom}_{#1}^{*}(#2,#3)}
\newcommand{\Homd}[4]{\operatorname{Hom}^{#1}_{#2} (#3, #4)}
\newcommand{\Hom}[3]{\operatorname{Hom}_{#1}(#2,#3)}
\newcommand{\Ext}[4]{\operatorname{Ext}_{#2}^{#1}(#3,#4)}
\newcommand{\brm}[2]{\lbrace \, #1 \, | \, #2 \rbrace}
\newcommand\id{\operatorname{inj\,dim}}
\newcommand{\Thick}[2]{\mathsf{thick}_{#1}(#2)}
\newcommand{\Loc}[2]{\mathsf{loc}_{#1}(#2)}
\newcommand\Img{\operatorname{Im}}
\newcommand{\xra}{\xrightarrow}
\renewcommand{\bf}{\mathbf{f}}
\newcommand{\li}{ < \infty}
\newcommand{\Df}{\mathsf{D}^{\mathsf{f}}(R)}
\theoremstyle{plain}
\newtheorem{theorem}{Theorem}[section]
\newtheorem*{introthm}{Theorem}
\newtheorem{Corollary}{Corollary}
\newtheorem{prop}[theorem]{Proposition}
\newtheorem{cor}[theorem]{Corollary}
\theoremstyle{definition}
\newtheorem{chunk}[theorem]{}
\newtheorem{remark}[theorem]{Remark}
\theoremstyle{remark}
\newtheorem*{acknow}{Acknowledgements}
\numberwithin{equation}{theorem}
\newcommand{\G}{\Gamma}
\newcommand{\koz}{/\!\!/}
\renewcommand{\K}{\mathsf{K}}
\newcommand{\Kinj}{\mathsf{K}(\mathsf{Inj\,R})}
\newcommand{\Kc}{\K^{\mathsf{c}}}
\newcommand{\Q}{\mathsf{Q}}
\renewcommand{\i}{\mathsf{i}}
\newcommand{\col}{\colon}
\newcommand{\Z}{\mathbb{Z}}
\renewcommand{\H}{H}
\newcommand{\be}{\mathsf{s}\,}
\newcommand{\kxC}{C \koz \be}
\newcommand{\df}{\partial}
\newcommand{\qu}{quasi-isomorphism }
\begin{document}

\title[Finite injective dimension]{Finite injective dimension over rings \\with Noetherian cohomology}

\author{Jesse Burke}
\address{Department of Mathematics\\ 
Universit\"at Bielefeld\\ 
33501 Bielefeld\\ 
Germany.}
\email{jburke@math.uni-bielefeld.de}



\begin{abstract}
We study rings that have Noetherian cohomology over a
ring of cohomology operators. Examples of such rings include
commutative complete intersection rings and finite dimensional
cocommutative Hopf algebras.  The main result is a
criterion for a complex of modules over a ring with Noetherian
cohomology to have finite injective dimension. The criterion implies in particular that for any
module over such a ring, if all higher self-extensions of the
module vanish, then it must have finite injective dimension. This
generalizes a theorem of Avramov  and Buchweitz
for complete intersection rings, and
 a well-known theorem in the representation theory of finite groups
 from finitely generated to arbitrary modules.
\end{abstract}

\maketitle

\section{Introduction}
\newcommand{\ja}{\mathfrak{r}}
\newcommand{\hh}{\operatorname{H} \! \operatorname{H}^*(R\, | \, k)}
\newcommand{\hhev}{\operatorname{H} \! \operatorname{H}^{\text{ev}}(R\, |\, k)}

Let $R$ be an associative ring and $\S$ a ring of cohomology operators
on $R$. Thus $\S$ is a commutative graded ring and there exists a family of
homogeneous maps of graded rings indexed by complexes of
$R$-modules $M$: \[\zeta_M: \S \to \Ext * R M M,\]
that satisfies a certain commutativity condition. See Section \ref{sec:preliminaries}
for the full definition. We say $R$ has \emph{Noetherian cohomology}
over $S$ if $\Ext * R M M$ is a Noetherian $\S$-module via $\zeta_M$ for all $M$ with Noetherian cohomology over $R$.

In this paper we prove the following:
\begin{introthm}
\label{main_intro_thm}
 Let $R$ be a ring with Noetherian cohomology over a ring of cohomology operators $\S$,
and let $M$ be a complex of $R$-modules with $H^n(M) = 0$ for $n \gg
0$. Let $\S^+$ be the ideal $\oplus_{i \geq 1} \S^i $. If the $\S$-module $\Ext *  R M M$ is $\S^+$-torsion, then $M$ has finite injective dimension.
\end{introthm}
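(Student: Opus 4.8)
The plan is to pass to the homotopy category $\Kinj$ of complexes of injective $R$‑modules and to exploit the action of the cohomology operators there. First I would fix a K‑injective resolution $E=\i M$ of $M$ by injective modules, so that $\id_R M<\infty$ if and only if $E$ is homotopy equivalent to a complex of injectives vanishing in all large cohomological degrees. The maps $\zeta_N$ assemble into a homomorphism $\S\to Z^{*}(\Kinj)$ into the graded centre, making $\Kinj$ into an $\S$‑linear, compactly generated triangulated category; one thus has at one's disposal the local cohomology functors $\Gamma_{V(\ma)}$ and the localizations attached to homogeneous ideals $\ma\subseteq\S$, together with the cohomological support $\isupp{\S}(-)$ in the sense of Benson--Iyengar--Krause. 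Recall also that the compact objects of $\Kinj$ are exactly the injective resolutions of complexes having Noetherian cohomology over $R$, so that for such a compact object $C$ the graded ring $\Ends{\Kinj}{C}\cong\Ext{*}{R}{C}{C}$ is a Noetherian $\S$‑module by the standing hypothesis on $R$.

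The first real step will be to convert the torsion hypothesis into a statement about support: if $\Ext{*}{R}{M}{M}$ is $\S^{+}$‑torsion, then $\isupp{\S}(E)\subseteq V(\S^{+})$, equivalently $\Gamma_{V(\S^{+})}E\simeq E$. Choosing finitely many homogeneous generators $\chi_1,\dots,\chi_c$ of $\S^{+}$, an elementary computation shows that a \emph{unital} graded ring which is $\S^{+}$‑torsion is annihilated by some power $(\S^{+})^{r}$; applied to $\Ends{\Kinj}{E}\cong\Ext{*}{R}{M}{M}$ this gives $\zeta_M\bigl((\S^{+})^{r}\bigr)=0$, so $(\S^{+})^{r}$ annihilates the identity of $E$. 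Post‑composing with $\zeta_M$, every graded module $\Homs{\Kinj}{C}{E}$ is then $\S^{+}$‑torsion, and since $\isupp{\S}(E)=\bigcup_{C}\Supp{\S}\Homs{\Kinj}{C}{E}$ with $C$ ranging over the compact objects, the claim follows. (This step uses only that $\S^{+}$ is finitely generated.)

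The second step treats complexes with Noetherian cohomology over $R$, which is where that hypothesis enters: if $N$ has Noetherian cohomology over $R$ and $\Ext{*}{R}{N}{N}$ is $\S^{+}$‑torsion, then $\id_R N<\infty$. Indeed, applying the hypothesis to $(R/\ja)\oplus N$ shows $\Ext{*}{R}{R/\ja}{N}$ is a Noetherian $\S$‑module, while the argument of the first step shows it is annihilated by a power of $\S^{+}$; a Noetherian $\S^{+}$‑torsion graded module has finite length and hence is concentrated in finitely many degrees, so by the Bass‑type criterion for finite injective dimension in terms of the vanishing of $\Ext{i}{R}{R/\ja}{-}$ for $i\gg0$ we get $\id_R N<\infty$. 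To bring this to bear on $E$, I would invoke (or first establish) the stratification of $\Kinj$ by the $\S$‑action --- the classification of its localizing subcategories by subsets of the homogeneous spectrum of $\S$. This identifies the localizing subcategory $\Gamma_{V(\S^{+})}\Kinj$ of complexes supported over the irrelevant ideal as $\Loc{\Kinj}{T}$ for a single compact object $T$ with $\isupp{\S}(T)=V(\S^{+})$, concretely a Koszul object $\i(R/\ja)\koz(\chi_1,\dots,\chi_c)$; its endomorphism ring $\Ends{\Kinj}{T}\cong\Ext{*}{R}{T}{T}$ is Noetherian over $\S$ and $\S^{+}$‑torsion, so by the second step $T$ is homotopy equivalent to a complex of injectives vanishing in large degrees.

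By the first step $E\in\Gamma_{V(\S^{+})}\Kinj=\Loc{\Kinj}{T}$, so compact generation yields a triangle equivalence of $\Loc{\Kinj}{T}$ with the derived category of the differential graded ring $A=\RHom{R}{T}{T}$, whose cohomology $\Ext{*}{R}{T}{T}$ is concentrated in finitely many degrees; thus $A$ has nilpotent augmentation ideal $\bigoplus_{j\neq0}\H^{j}(A)$. The equivalence sends $E$ to the differential graded $A$‑module $U=\RHom{R}{T}{M}$, and since $\H(M)$ vanishes in large degrees and $T$ is a bounded complex of finitely generated modules, $\Ext{*}{R}{T}{M}=\H(U)$ vanishes in large degrees. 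Then $U$ is quasi‑isomorphic over $A$ to a differential graded module that vanishes in large degrees as a complex; transporting this back and tensoring with the complex $T$ of injectives (which itself vanishes in large degrees up to homotopy) exhibits $E$ as homotopy equivalent to a complex of injectives vanishing in large degrees, i.e., $\id_R M<\infty$. The main obstacle is precisely this last reduction: finite injective dimension is not preserved under the infinite coproducts and homotopy colimits through which $E$ is assembled from $T$, so one cannot argue naively inside $\Kinj$; the argument must be routed through the differential graded Morita description of $\Gamma_{V(\S^{+})}\Kinj$, and it genuinely uses both that $\H(M)$ is bounded above and that $A$ is ``graded Artinian.'' Making the truncation step over the noncommutative differential graded ring $A$ precise is the technical heart of the proof.
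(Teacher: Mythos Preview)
Your overall strategy---pass to $\Kinj$, use the $\S$-action to place $E=\i M$ in the localizing subcategory $\Gamma_{V(\S^+)}\Kinj$ generated by Koszul objects, then extract finite injective dimension---is the same as the paper's. The paper carries out your ``first step'' almost verbatim (via Bousfield localization and the torsion properties of Koszul objects), showing $E\in\Loc\K{C\koz\be\mid C\in\Kc}$.

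There are, however, two genuine gaps in your endgame. The ``Bass-type criterion'' in your second step is not available for an arbitrary associative ring: there is no distinguished ideal $\ja$ such that vanishing of $\Ext i R{R/\ja}{N}$ for $i\gg0$ detects finite injective dimension. The paper instead invokes \ref{fin_pd}(3) (a black box from \cite{AvIy10}) to conclude that each compact Koszul object $C\koz\be$ has finite projective dimension and hence lies in $\Thick\K{\i R}$. This immediately gives $\Loc\K{C\koz\be\mid C\in\Kc}\subseteq\Loc\K{\i R}$, so one may take the single compact generator $T=\i R$---no stratification is needed, and indeed stratification of $\Kinj$ is \emph{not} known in the generality of the theorem.

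More seriously, your final dg-Morita transport is not actually carried out; you flag it as the technical heart and leave it open. The paper bypasses it with an elementary lemma (Proposition~\ref{Locid}): given $\i M\in\Loc\K{\i R}$ with $\H^n(M)=0$ for $n\gg0$, choose a bounded-above projective resolution $P\xra{\simeq}M$, resolve each $P^j$ by injectives of length at most $\id_R R$, and totalize to obtain a bounded-above complex $L$ of injectives with $L\xra{\simeq}M$. The cone $T$ of $L\to\i L$ is acyclic, so $\Homs\K{\i R}{T}=0$; since $\i L\in\Loc\K{\i R}$, this forces $\Homs\K{\i L}{T}=0$, whence $L\to\i L$ splits in high degrees and $\i L$ is bounded above up to homotopy. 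If you unwind your proposed dg-Morita argument with $T=\i R$ (so $A\simeq R$ concentrated in degree~$0$ and the inverse equivalence is $-\otimes_R^L\i R$ applied to a bounded-above projective resolution of $M$), you recover exactly this construction---but the paper's hands-on proof avoids having to make the abstract transport precise.
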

Recall that $\Ext *  R M M$ is $S^+$-torsion if for every
$x \in \Ext *  R M M$ there exists an integer $n$ such that
$(S^+)^n \, x = 0$. There is, for instance, an integer $l$ depending on the degrees of the
generators of $\S^+$, such that if $\Ext {nl} R M M = 0$ for some $n
\geq 1$, then $\Ext * R M M$ is $\S^+$-torsion; see \ref{cor}. A complex has finite injective dimension if it has a bounded above
semi-injective resolution, see \ref{def_semi_inj}. If the complex in question is a module, then a
semi-injective resolution is tan injective
resolution in the classical sense. To compute $\Ext * R
M M$ for a complex $M$, one may use a semi-injective resolution, and
so if $M$ is a module then $\Ext * R M M$ agrees with the classical
notion. Thus, a special case of the theorem is that if $M$ is an $R$-module with $\Ext n R M M = 0$ for $n \gg 0$, then $M$ has finite injective dimension.

There are many rings with Noetherian cohomology and hence
to which the result above applies. First, assume that $R$ is a ring
of the form $Q/(f_1, \ldots, f_c)$, where $Q$ is a commutative Noetherian regular ring
of finite Krull dimension and $f_1, \ldots, f_c$ is a $Q$-regular
sequence. The graded polynomial ring $\S = R[\chi_1, \ldots, \chi_c]$,
where the degree of each $\chi_i$ is 2, is a ring of cohomology
operators for $R$ and $R$ has Noetherian cohomology over $S$ by
\cite{Gu74}. In this context the Theorem
generalizes a key instance of \cite[Theorem 4.2]{AvBu00} from finitely
generated modules to a large class of complexes, including all modules:
\begin{Corollary}
\label{ci}
  Let $R = Q/(f_1, \ldots, f_c)$, where $Q$ is a commutative
  Noetherian regular ring of finite Krull dimension and $f_1, \ldots, f_c$ is a $Q$-regular sequence. Let $M$ be a complex of $R$-modules with $H^n(M) = 0$ for $n \gg 0$. If $\Ext {2n} R M M = 0$ for some $n \geq 1$, then $M$ has finite injective dimension.
\end{Corollary}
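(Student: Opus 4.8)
The plan is to obtain this as a direct application of Theorem~\ref{main_intro_thm}, once the appropriate ring of cohomology operators is in place and the torsion hypothesis is checked. First I would recall that for $R = Q/(f_1,\dots,f_c)$ as in the statement, the Eisenbud--Gulliksen construction (as recorded in Section~\ref{sec:preliminaries}) equips $R$ with a ring of cohomology operators whose underlying graded ring is the polynomial ring $\S = R[\chi_1,\dots,\chi_c]$ with $\deg \chi_i = 2$. Since $Q$ is regular of finite Krull dimension, every finitely generated $Q$-module has finite projective dimension, so the hypotheses of \cite{Gu74} are satisfied and $\Ext{*}{R}{M}{M}$ is a Noetherian $\S$-module via $\zeta_M$ whenever $M$ has Noetherian cohomology over $R$; that is, $R$ has Noetherian cohomology over $\S$ in the sense of the introduction.

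Next I would reduce the self-extension hypothesis to the $\S^+$-torsion condition needed by Theorem~\ref{main_intro_thm}. The ideal $\S^+ = (\chi_1,\dots,\chi_c)$ is generated by elements of degree $2$, so the integer $l$ appearing in \ref{cor} may be taken to be $2$. Hence the assumption $\Ext{2n}{R}{M}{M} = 0$ for some $n \geq 1$ gives, by \ref{cor}, that the $\S$-module $\Ext{*}{R}{M}{M}$ is $\S^+$-torsion. Since $M$ satisfies $H^n(M) = 0$ for $n \gg 0$ by hypothesis, all the assumptions of Theorem~\ref{main_intro_thm} are met, and it yields directly that $M$ has finite injective dimension.

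I do not expect a substantive obstacle here: the mathematical content is entirely absorbed into Theorem~\ref{main_intro_thm}, Gulliksen's finite generation result, and the elementary reduction \ref{cor}. The only points meriting a sentence of care are (i) confirming that the cohomology operators over a complete intersection genuinely satisfy the axioms laid out in Section~\ref{sec:preliminaries} (naturality of the $\zeta_M$ and the commutativity condition), and (ii) noting precisely where the regularity and finite Krull dimension of $Q$ enter, namely in guaranteeing $\pd_Q$ of finitely generated modules is finite so that \cite{Gu74} applies to the relevant class of modules. With these in hand the deduction is immediate.
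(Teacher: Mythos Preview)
Your proposal is correct and follows essentially the same route as the paper: establish that $\S = R[\chi_1,\ldots,\chi_c]$ with $|\chi_i|=2$ is a ring of cohomology operators over which $R$ has Noetherian cohomology (the paper cites \cite{AvSu98} for complexes, with \cite{Gu74} as the module case), observe that $l=2$ so that $\Ext{2n}{R}{M}{M}=0$ forces $\S^+$-torsion via \ref{cor}, and invoke the main theorem. The only cosmetic difference is that the paper defers the verification of the cohomology-operator axioms and Noetherian cohomology to \ref{opers_gullik_thm} rather than sketching it inline.
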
 
Indeed, if $\Ext {2n} R M M = 0$ for some $n$, then $\Ext * R M M$
must be $\S^+$-torsion since the degree of $\chi_i$ is 2. Thus $l = 2$
in the notation above; see \ref{opers_gullik_thm} for further details.

Now let $R$ be a Hopf algebra over a field $k$. Any commutative subring of
$\Ext * R k k$ is a ring of cohomology operators on $R$; see
\ref{hopf_alg}. Let $S$ be the center of $\Ext * R k k$. It follows from the main result of
\cite{FS} that every finite
dimensional cocommutative Hopf algebra has Noetherian cohomology over
$S$. Thus we have the following:
\begin{Corollary}
Let $R$ be a finite dimensional cocommutative Hopf algebra over a
field $k$ and let $\S$ be the center of
$\Ext * R k k$. For an $R$-complex $M$ with $H^n(M) = 0$ for all $n \gg
0$, if $\Ext * R M M$ is $\S^+$-torsion, then $M$ has finite injective dimension.
\end{Corollary}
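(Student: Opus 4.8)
The plan is to obtain this as a direct application of the main theorem, Theorem~\ref{main_intro_thm}. That theorem has three hypotheses: that $R$ have Noetherian cohomology over a ring of cohomology operators $\S$; that $H^n(M) = 0$ for $n \gg 0$; and that $\Ext * R M M$ be $\S^+$-torsion. The last two are exactly the standing hypotheses of the corollary, and ``$M$ has finite injective dimension'' is exactly its conclusion. So there is nothing to prove beyond checking that the center $\S$ of $\Ext * R k k$ is a ring of cohomology operators on $R$ over which $R$ has Noetherian cohomology, and both of these facts are already recorded in the introduction.

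For the cohomology-operator structure I would simply invoke \ref{hopf_alg}. For any Hopf algebra $R$ the coproduct turns $- \otimes_k M$ into an endofunctor of $R$-modules, and cup product then makes $\Ext * R M M$ a graded module over the graded ring $\Ext * R k k$ for every $R$-complex $M$, compatibly (up to sign) with composition products and with the maps induced by morphisms $M \to N$. Restricting this action along any commutative graded subring of $\Ext * R k k$ yields a family of homogeneous ring maps $\zeta_M \colon \S \to \Ext * R M M$ satisfying the commutativity axiom of a ring of cohomology operators; the center is the canonical and largest such choice. No finiteness hypothesis enters here.

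The real content is the Noetherianity, and this is the one point I would not reprove. A finite dimensional cocommutative Hopf algebra $R$ over $k$ is, by Cartier duality, the group algebra $kG$ of a finite group scheme $G$, so $\Ext * R k k = H^*(G,k)$ and $R$-modules are representations of $G$; the finite-generation theorem of \cite{FS} then states that $H^*(G,k)$ is a finitely generated $k$-algebra and that $\Ext * R N N$ is a finitely generated $H^*(G,k)$-module for every $R$-module $N$ with Noetherian cohomology over $R$. Because $H^*(G,k)$ is module-finite over a Noetherian central subalgebra sitting inside $\S$ --- the whole ring in characteristic two, its even-degree subalgebra otherwise --- the ring $\S$ is Noetherian and $\Ext * R N N$ is a finitely generated, hence Noetherian, $\S$-module. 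Thus $R$ has Noetherian cohomology over $\S$, which is exactly the remaining ingredient.

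Putting the two steps together, $R$ has Noetherian cohomology over the ring of cohomology operators $\S$, the complex $M$ satisfies $H^n(M) = 0$ for $n \gg 0$, and $\Ext * R M M$ is $\S^+$-torsion by assumption; Theorem~\ref{main_intro_thm} then applies verbatim and yields that $M$ has finite injective dimension. The argument is therefore a pure reduction: the hard work is packaged inside \cite{FS}, and once that input is granted the corollary is immediate. The same reasoning would prove the analogue with $\S$ replaced by any commutative subring of $\Ext * R k k$ over which cohomology happens to be Noetherian.
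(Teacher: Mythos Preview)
Your proposal is correct and follows essentially the same route as the paper: in \ref{hopf_alg} the paper constructs the operators via the tensor action of $\i k$ on $\Kinj$, observes that the even-degree part $\S^{\mathrm{even}}$ (contained in the center) is commutative and hence a ring of cohomology operators, invokes \cite{FS} for Noetherianity, and then applies Theorem~\ref{main_thm}. Your reduction, including the remark that $\Ext * R k k$ is module-finite over its even subalgebra, matches this exactly; the only cosmetic difference is that you phrase the Friedlander--Suslin input via Cartier duality and finite group schemes, whereas the paper cites \cite{FS} directly.
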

In particular the result applies to the group ring
of a finite group over a field where it generalizes a well known
result for finite dimensional representations to, in particular,
arbitrary representations.


For the proof of the main theorem, we work in an ``infinite completion'' of
the bounded derived category of Noetherian $R$-modules. This allows us
to avoid finiteness conditions on the complexes to which the criterion
is applied. By \cite{Kr05}, such a completion is given by the homotopy category of injective $R$-modules. We recall relevant facts about this category in Section 2. In Section 3 we give the precise definition of a ring of cohomology operators and prove a preliminary result. The proof of the theorem occupies Section 4 and in Section 5 we apply it to the cases discussed above.

The techniques in this paper are inspired by \cite{BIK08}. We have minimized the use of machinery from that paper to make this one more self-contained.

\begin{acknow}
This work formed part of my Ph.D. thesis. I would like to thank my
advisor Srikanth Iyengar for his support and guidance, Dave
Benson for discussing this material with me, and the
referee for providing several helpful suggestions including an improved
proof of Proposition \ref{Locid}. I was
partly supported by the NSF Grant DMS 0903493.
\end{acknow}

\section{Background}
\label{setup_sec}
Throughout $R$ denotes an associative ring. By the word ``module'' we mean a left-module. An $R$-complex is a complex of $R$-modules.

In this section we briefly recall some definitions and results on triangulated categories. We then review the homological algebra of complexes that we will need.
\begin{chunk}
Let $M$ be an $R$-complex. We write $\H^n(M)$ for the $n$th cohomology group of $M$ and $\H(M)$ for the graded $R$-module that in degree $n$ is $\H^n(M)$. We say $M$ has \emph{finite cohomology} if $\H(M)$ is a Noetherian $R$-module; this implies in particular that $\H^n(M) = 0$ for $|n| \gg 0$. The complex $M$ is \emph{acyclic} if $\H(M) = 0$.

Let $N$ be another $R$-complex. We denote the \emph{Hom-complex} between $M$ and $N$ by $\Hom R M N$; this has components and differential given by
\[ \Hom R M N ^ n = \prod_{i \in \Z} \Hom R {M^i} {N^{i+n}}  \qquad \df(f) = \df^N \circ f - (-1)^{|f|} f \circ \df^M, \]
where $|f|$ is the degree of $f$.
A \emph{morphism} $f: M \to N$ is a degree zero cycle of $\Hom R M N$,
i.e. $\df(f) = 0$. It is a \emph{\qu}when $\H(f): \H(M) \to \H(N)$ is an isomorphism.
\end{chunk}

\begin{chunk}
The \emph{homotopy category of injective $R$-modules}, denoted by
$\Kinj$, has as objects complexes of injective $R$-modules. The
morphisms between objects $X,Y$ are given by
\[ \Hom {\Kinj} X Y := \H^0( \Hom R X Y). \]
In other words, morphisms in $\Kinj$ are homotopy equivalence classes of morphisms of complexes.

The standard shift functor on $\Kinj$ is denoted $\Sigma$. Thus for a
complex \[X = \ldots \to X^{n-1} \to X^n \to X^{n+1} \to \ldots\]
we have that $(\Sigma X)^n = X^{n+1}$ and $\partial_{\Sigma X} = - \partial_X$. By $\Homs \K X Y$ we denote the $\mathbb{Z}$-graded abelian group that in degree $n$ is $\Hom \K X {\Sigma^n Y}.$ With multiplication given by composition $\Homs \K X X$ is a graded ring while $\Homs \K X Y$ is a bimodule with left action by $\Homs \K Y Y$ and right action by $\Homs \K X X$.
\end{chunk}

\begin{chunk}
\label{thick_loc_def}
The category $\Kinj$ is triangulated. For a proof and reference on
triangulated categories see e.g. \cite{Ve96}. A triangulated
subcategory of $\Kinj$ is \emph{thick} if it is closed under direct
summands; it is \emph{localizing} when it is closed under set-indexed
direct sums. Every localizing subcategory in $\Kinj$ is automatically thick, see
e.g. the proof of \cite[1.4.8]{MR1388895}.

For a subclass of objects $\C$ in $\Kinj$, we denote by $\Thick \K \C$, respectively $\Loc \K \C$, the smallest thick, respectively localizing, subcategory containing $\C$. One may realize these by taking the intersection of all thick, respectively localizing, subcategories containing $\C$.

An object $C \in \Kinj$ is \emph{compact} if the natural map
\[ \bigoplus_{i \in I} \Hom \Kinj C {X_i} \to \Hom \Kinj C {\bigoplus_{i \in I}  X_i} \]
is an isomorphism for any set of objects $\{ X_i \}_{i \in I}$ of $\Kinj$. We denote the collection of compact objects of $\Kinj$ by $\Kinj^{\mathsf c}$. 

When $R$ is left-Noetherian, \cite[2.3.1]{Kr05} shows that $\Kinj$ is \emph{compactly generated}, i.e. an object $X \in \Kinj$ is nonzero if and only if there exists a compact object $C \in \Kinj$ such that $\Hom \Kinj C X \neq 0$.
\end{chunk}


\begin{chunk}
\label{def_semi_inj}
A complex of injective modules $I$ is \emph{semi-injective} if for all acyclic complexes $A$, the complex $\Hom R A I$ is acyclic. When $I$ is semi-injective it has the following lifting property: for every morphism $\alpha: M \to I$ and every quasi-isomorphism $\beta: M \to N$ there exists a unique up to homotopy map $\gamma: N \to I$ making the following diagram commute:
\[ \xymatrix{ M \ar[r]^\beta_{\simeq} \ar[d]_{\alpha} & N
  \ar@{.>}[dl]^{\gamma} \\ I & }. \] 

A \emph{semi-injective resolution} of a complex $M$ is a quasi-isomorphism $\eta_M : M \to \i M$, where $\i M$ is semi-injective. Every complex has a semi-injective resolution; this was first proven in \cite{Sp88}. Moreover, by the lifting property, a semi-injective resolution is unique up to isomorphism in $\Kinj$.

When $M$ is a module, viewed as a complex concentrated in degree 0, a
semi-injective resolution of $M$ is just an injective resolution in the usual sense.
\end{chunk}

\begin{chunk}
\label{ext_inj_dim}
Let $\i M,\i N$ be semi-injective resolutions of complexes $M,N$, respectively. Define the derived $\operatorname{Hom}$ functors as \[\Ext n R M N := \Hom \K {\i M} {\Sigma^n \i N} \cong \H^n \Hom R {\i M} {\i N}. \] 
The lifting property of semi-injective complexes shows that $\Ext * R M N$ is independent of the choice of resolutions, up to isomorphism.

If there exists a semi-injective resolution $\eta_M : M \to \i M$ such that $(\i M)^n = 0$ for all $n \gg 0$, then we say $M$ has \emph{finite injective dimension} and write \emph{$\id_R M \li$}. 

\end{chunk}

\begin{chunk}
\label{desc_comp_obs}
Let $\D(R)$ be the unbounded derived category of $R$-modules, see
e.g. \cite{Ve96} for the definition. We denote by $Q$ the localization
functor $Q: \Kinj \to \D(R)$ that sends a complex to its image in the derived category. When $R$ is left-Noetherian \cite[2.3.2]{Kr05} shows that $Q$ restricts to an equivalence 
\[Q: \Kinj^{\mathsf c} \xra{\cong} \Df,\]
where $\Df$ is the full subcategory of $\D(R)$ of objects with finite
cohomology. The functor $Q$ has a right adjoint, denoted by $Q_\rho$,
which takes any complex to a semi-injective resolution, viewed as an
object of $\Kinj$ by \cite[3.9]{Kr05}.

When restricted to $\Df$, $Q_\rho$ gives an inverse to the equivalence
above. Thus
\emph{the compact objects of $\K$ are exactly the semi-injective
  resolutions of objects in $\Df$}. 
\end{chunk}

The following construction is a key part of the proof of the main theorem.
\begin{chunk}
\label{bos_loc}
Let $\tS = \Loc \K \C$, for a set of compact objects $\C$ in $\Kinj$. For any object $X$ in $\Kinj$ there is a triangle
\[ \G X \to X \to \Ls X \to \]
such that $\G X \in \tS$ and $\Ls X \in {\tS^{\perp}},$ where \[\tS^{\perp} = \brm{ Y \in \Kinj}{ \Hom \K Z Y = 0 \text{ for all } Z \in \tS}.\] This is a form of \emph{Bousfield localization}; see \cite[1.7]{Ne92} for a proof.
\end{chunk}

\section{Cohomology operators} 
\label{sec:preliminaries} 

\newcommand{\cat}{\Kinj}
\newcommand{\Rc}{R^{\mathsf{c}}}

Throughout this section $\S = \oplus_{i \geq 0} \S^i$ denotes a commutative graded ring .
\begin{chunk}
\label{coh_noeth}
We say $\S$ is a \emph{ring of cohomology operators} for $R$ if for every $X \in \cat$ there is a map of graded rings
\[ \zeta_X: \S \to \Homs {\cat} X X \] such that the two $\S$-module structures on $\Homs \cat X Y$ via $\zeta_X$ and $\zeta_Y$ agree. Thus for each $\alpha \in \Homs {\cat} X Y$, and all homogeneous $s \in \S$, we require
\begin{equation}
\label{comm_rels}
\zeta_Y( s ) \cdot \alpha = (-1)^{|s|}\alpha \cdot \zeta_X(s).
\end{equation}
We say $R$ has \emph{Noetherian cohomology} over $\S$ if $\S$ is a
Noetherian ring of finite Krull dimension and $\Homs \cat C C$ is a Noetherian $\S$-module for all compact objects $C$ in $\Kinj$.
\end{chunk}

\begin{remark}
\label{remark_def_nc}
Equivalently, $\S$ is a ring of cohomology operators for $R$ if there
is a ring map $\S \to \mathsf{Z}( \Kinj )$, where $\mathsf{Z}(-)$ denotes the graded
center of a triangulated category; see e.g.\ \cite[Section 4]{BIK08}.

A ring of cohomology operators for $R$ has been defined previously in
\cite{AvIy07} to be a ring map $\S \to \mathsf{Z}( \D(R) )$. The
essentially surjective
functor $\Q: \Kinj \to \D(R)$ induces a ring map $\mathsf{Z}(
\Kinj ) \to \mathsf{Z}( \D(R) )$ and thus a ring of cohomology
operators in our sense gives rise to a ring of cohomology operators in
the sense of \cite{AvIy07}.
\end{remark}

\newcommand{\cn}{cohomologically Noetherian }

In the rest of the section we assume that $\S$ is Noetherian, has finite Krull dimension, and is a ring of cohomology operators on $R$. We set $\S^+ = \oplus_{i \geq 1} \S^i$.

We will need the following result on the structure of a ring with Noetherian cohomology.
\begin{chunk}
\label{fin_pd} 
  Assume $R$ has Noetherian cohomology over $\S$. Then the
  following hold:
  \begin{enumerate}[\quad\rm(1)]
  \item $R$ is left-Noetherian;
  \item $\id_R R \li$;
  \item  An $R$-complex with finite cohomology $M$ has finite projective dimension if and only if $\Ext n R M
    M = 0 \text{ for all } n \gg 0$ if and only if $M$ has finite
    injective dimension.
  \end{enumerate}
This is contained in \cite{AvIy10}, where less assumptions are placed on $\S$. For the rings in Section \ref{apps} to which we apply the Theorem, the properties above are well-known.
\end{chunk}

\newcommand{\bs}{\mathsf{s}}
The following construction was introduced in \cite{BIK08}.
\begin{chunk}
\label{koz_ob}
Let $s$ be a homogeneous element of $\S$ of degree $n$ and let $X$ be an object of $\Kinj$. The \emph{Koszul object} of $s$ on $X$, denoted $X \koz s$, is the mapping cone of $\zeta_X(s) \in \Hom \Kinj X {\Sigma^n X}$. Thus there is an exact triangle
\begin{equation} X \xra {\zeta_X(s)} \Sigma^n X \to X \koz s\to, \label{koz_tri} \end{equation}
and $X \koz s$ is unique up to isomorphism.  For $\bs = s_1, \dots, s_r$ a sequence of homogeneous elements of $\S$, the Koszul object of $\bs$ on $X$, denoted $X \koz \bs$, is defined inductively as the Koszul object of $s_r$ on $X \koz ({s_1, \dots,s_{r-1}})$. 

Let $Y$ be another object of $\Kinj$. We need the following properties of Koszul objects:
\begin{enumerate}
\label{koz_is_compact}
\item If $X$ is compact, then so is $X \koz\bs$; this follows by
  induction and the triangle \eqref{koz_tri} above.
\item
\label{koz_ann} There exists an integer $n \geq 0$, independent of $X$ and $Y$, such that 
\[
  (\bs)^n \Homs {\Kinj} Y {X \koz \bs} = 0 = (\bs)^n \Homs {\Kinj} {X
    \koz \bs} Y,
\]
where $(\bs) = (s_1, \ldots, s_n)$ is the ideal in $\S$ generated by
$s_1, \ldots, s_n$.
\item
\label{koz_tor} If $\Homs \Kinj {X \koz \bs} Y = 0$ and the $\S$-module $\Homs \Kinj X Y$ is $\bs$-torsion then \[\Homs \Kinj X Y = 0.\]

\end{enumerate}
The last two results are contained in \cite[5.11]{BIK08}.
\end{chunk}

The next result shows that every compact object of $\Kinj$ can be cut down to an object with finite projective dimension using the above construction.
\begin{prop}
\label{koz_pd}
Assume $R$ has Noetherian cohomology over $\S$. Let $\be = s_1, \ldots, s_r$ be a set of generators of the ideal $\S^+ = \oplus_{ i > 0} \S^i$ and let $\i R \in \Kinj$ be an injective resolution of $R$. For every compact object $C$ of $\Kinj$ the object $\kxC$ is in $\Thick \K {\i R}$. In particular there is an inclusion of subcategories:  \[\Thick\K { C \koz  \be | \, C \in \Kinj^{\mathsf c} } \subseteq \Thick \Kinj {\i R} . \]
\end{prop}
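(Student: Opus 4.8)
The plan is to reduce, via the description of the compact objects of $\Kinj$ in \ref{desc_comp_obs}, to the structural fact \ref{fin_pd}(3). Since $C$ is compact, so is the Koszul object $\kxC$ (property (1) of \ref{koz_ob}), and hence by \ref{desc_comp_obs} we may write $\kxC \cong Q_\rho M'$ in $\Kinj$, where $M' := Q(\kxC)$ lies in $\Df$. I will argue that $\pd_R M' \li$. Granting this, $M'$ is a perfect complex and therefore lies in $\Thick{\Df}{R}$ --- a bounded complex of finitely generated projective modules is obtained from $R$ by finitely many mapping cones and retracts. Applying the triangulated equivalence $Q_\rho \colon \Df \xra{\cong} \Kinj^{\mathsf c}$ from \ref{desc_comp_obs}, and noting that $Q_\rho R = \i R$ because $Q_\rho$ sends a module to its injective resolution, we conclude
\[ \kxC \cong Q_\rho M' \in \Thick{\Kinj^{\mathsf c}}{Q_\rho R} = \Thick{\Kinj^{\mathsf c}}{\i R} \subseteq \Thick{\K}{\i R}, \]
which is the assertion; the inclusion of subcategories in the statement is then immediate, since $\Thick{\K}{\i R}$ is a thick subcategory containing every such $\kxC$.

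Thus everything rests on showing $\pd_R M' \li$. By \ref{fin_pd}(3) it is enough to show $\Ext{j}{R}{M'}{M'} = 0$ for $j \gg 0$; by \ref{ext_inj_dim} this group is the degree-$j$ component of the graded $\S$-module $E := \Homs{\Kinj}{\kxC}{\kxC}$. Two facts about $E$ now combine. First, $E$ is a Noetherian $\S$-module, since $R$ has Noetherian cohomology over $\S$ and $\kxC$ is compact. Second, by property \ref{koz_ann} of the Koszul construction (with $Y = \kxC$) there is an integer $n$ with $(\be)^n E = 0$; as $\be$ generates $\S^+$, this says $(\S^+)^n E = 0$, so $E$ is $\S^+$-torsion.

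It remains to record the elementary ring-theoretic point that a finitely generated, $\S^+$-torsion graded module $E$ over the Noetherian, non-negatively graded ring $\S$ is concentrated in finitely many degrees: choosing finitely many homogeneous generators, each killed by a power of $\S^+$, yields $(\S^+)^m E = 0$ for some $m$, so $E$ is a module over $\S/(\S^+)^m$; and since $\S^+$ is generated by finitely many homogeneous elements of positive degrees $e_1, \dots, e_r$, one has $\S_j \subseteq (\S^+)^m$ once $j \geq m \max_i e_i$, which forces $\S/(\S^+)^m$, and hence $E$, to vanish in all sufficiently high degrees. Therefore $\Ext{j}{R}{M'}{M'} = 0$ for $j \gg 0$, as needed.

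The step I expect to do the real work is the middle one: recognising that cutting $C$ down by a generating set of $\S^+$ ``bounds'' its self-extensions --- $\Homs{\Kinj}{\kxC}{\kxC}$ is not merely Noetherian over $\S$ but concentrated in finitely many degrees --- so that $\kxC$ acquires finite projective dimension. Once that is in place, the rest is the formal transport along the equivalences $Q$ and $Q_\rho$ together with routine manipulation of the Koszul triangles \eqref{koz_tri}.
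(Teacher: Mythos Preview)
Your proof is correct and follows essentially the same route as the paper's: both use compactness of $\kxC$ together with \ref{koz_ob}(\ref{koz_ann}) and the Noetherian cohomology hypothesis to conclude that $\Homs{\Kinj}{\kxC}{\kxC}$ vanishes in high degrees, then invoke \ref{fin_pd}(3) and transport finite projective dimension along the equivalence $Q_\rho$ of \ref{desc_comp_obs}. The only differences are cosmetic---you name $M' = Q(\kxC)$ explicitly and spell out the ``standard argument'' that a finitely generated $\S^+$-torsion graded $\S$-module is bounded, whereas the paper leaves this implicit and works directly with $\kxC$ (already semi-injective, being compact).
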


\begin{proof}
By \ref{koz_ob}(2) there exists $n \geq 1$ such that $(\be)^n \Homs \Kinj {\kxC} {\kxC}  = 0.$ Since $\kxC$ is compact, the $\S$-module $\Homs  \Kinj {\kxC} {\kxC}$ is finitely generated by the definition of Noetherian cohomology. 
A standard argument now shows that
\begin{equation} \label{high_vanish}\Homd m {\Kinj} {\kxC} {\kxC} = 0 \text{ for } m \gg 0.\end{equation}
Since $\kxC$ is compact, by \ref{desc_comp_obs}, the complex $\kxC$ is semi-injective. Thus
\[ \Homs \Kinj {\kxC} {\kxC} \cong \Ext * R {\kxC} {\kxC}.\]
Now \ref{high_vanish} and \ref{fin_pd}(3) show that ${\kxC}$ has finite projective dimension. One checks, by induction on projective dimension for instance, that this implies that $\kxC \in \Thick {\D(R)} {R}.$ Since triangulated functors preserve thick subcategories we have that \[Q_\rho (\kxC) \in \Thick \Kinj {Q_\rho R}.\] As semi-injective resolutions are unique in $\Kinj$ and $\kxC$ and $Q_\rho (\kxC)$ are semi-injective, we have that $Q_\rho (\kxC) \cong \kxC$ and $Q_\rho R \cong \i R$. Stringing together the above shows that $\kxC$ is in $\Thick \K {\i R}$.
\end{proof}

\section{Finite Injective Dimension}
In this section we prove the theorem in the introduction. To do this we need the following:
\begin{prop}
\label{Locid}
Let $R$ be a left-Noetherian ring that has finite injective dimension
as a left $R$-module and let
$M$ be an $R$-complex with $\H^n(M) = 0$ for $n \gg 0$. Let
$\i R$ and $\i M$ be semi-injective resolutions of $R$ and $M$
respectively. If $\i M$ is in $\Loc\K {\i R}$, then $M$ has finite injective dimension.
\end{prop}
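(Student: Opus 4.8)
The plan is to show that the class of complexes $X$ for which "$\i X \in \Loc_{\K}(\i R)$ implies $\id_R X < \infty$" ought really to be phrased at the level of $\Kinj$: namely, I will prove that every object of $\Loc_{\K}(\i R)$ has finite injective dimension, in the sense that it is isomorphic in $\Kinj$ to a bounded-above complex of injectives, and then transfer this back to $M$ via the semi-injective resolution. First I would observe that $\i R$ itself is a bounded-above complex of injectives, since $\id_R R < \infty$ by hypothesis; more precisely, $\i R$ may be taken with $(\i R)^n = 0$ for $n \gg 0$. Let $\mathcal{F}$ denote the full subcategory of $\Kinj$ consisting of those $X$ that are isomorphic in $\Kinj$ to a complex $Y$ of injective modules with $Y^n = 0$ for $n \gg 0$. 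The key point is that $\mathcal{F}$ is a localizing subcategory of $\Kinj$ containing $\i R$, whence $\Loc_{\K}(\i R) \subseteq \mathcal{F}$, and in particular $\i M \in \mathcal{F}$; since $\i M$ is already semi-injective and isomorphic in $\Kinj$ to a bounded-above complex of injectives, $M$ has finite injective dimension.

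The work is thus in checking that $\mathcal{F}$ is localizing. Closure under $\Sigma^{\pm 1}$ is immediate. For closure under arbitrary set-indexed coproducts: a coproduct of bounded-above complexes of injectives is again a complex of injectives but need not be bounded above, so a naive argument fails — however, one can bound the "top" uniformly after using that each summand, up to homotopy, can be replaced by its soft truncation, and more carefully one uses the standard fact that $\id_R X < \infty$ for all $X$ in a set $\{X_i\}$ does not give a uniform bound. This is the main obstacle. To get around it I would instead characterize $\mathcal{F}$ cohomologically: $X \in \mathcal{F}$ if and only if $\Ext^n_R(k\text{-like test objects}, X)$ — more robustly, if and only if $\Hom_{\K}(C, \Sigma^n X) = 0$ for $n \ll 0$ for every compact $C$. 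Indeed, for $X$ semi-injective, having a bounded-above injective representative is equivalent (by a truncation argument and compact generation, \ref{thick_loc_def} and \cite{Kr05}) to the vanishing of $\Hom_{\K}(C, \Sigma^n X)$ for all compact $C$ and all $n$ sufficiently negative — where "sufficiently negative" is allowed to depend on $C$. Since $\i R$ is bounded above, $\Hom_{\K}(C,\Sigma^n \i R) = \Ext^n_R(C, R) = 0$ for $n \ll 0$ (here $C$ compact has bounded-below cohomology), so $\i R$ satisfies this condition.

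Now the condition "$\Hom_{\K}(C, \Sigma^n X) = 0$ for $n \ll 0$, for every compact $C$" is visibly closed under shifts and under cones (by the long exact sequence in $\Hom_{\K}(C,-)$), and it is closed under arbitrary coproducts precisely because $C$ is compact: $\Hom_{\K}(C, \Sigma^n \bigoplus_i X_i) \cong \bigoplus_i \Hom_{\K}(C, \Sigma^n X_i)$, and for fixed $C$ there is a single $n_0$ (the bound coming from $C$, using that the $X_i$ all lie in the class) below which every term vanishes — here I use that the bound may be taken to depend only on $C$ and not on the object, which follows from the truncation description since it is governed by how far down the cohomology of $C$ extends against a fixed bound for $\i R$. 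Thus this cohomologically-defined class is a localizing subcategory of $\Kinj$ containing $\i R$, hence contains $\Loc_{\K}(\i R) \ni \i M$. Finally, since $\H^n(M) = 0$ for $n \gg 0$ the complex $\i M$ has cohomology bounded above, and combined with membership in the class one truncates $\i M$ from above without changing its isomorphism type in $\Kinj$, producing a bounded-above semi-injective resolution of $M$; therefore $\id_R M < \infty$. The referee's improved proof presumably streamlines exactly this truncation-and-compact-generation step, so I would expect the clean writeup to package the cohomological criterion as a short lemma and then invoke it twice.
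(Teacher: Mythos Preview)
Your approach has a genuine gap: the cohomologically defined class of objects $X$ with $\Hom {\K} C {\Sigma^n X} = 0$ for $n$ beyond some bound (incidentally, the vanishing is for $n \gg 0$, not $n \ll 0$) is \emph{not} closed under coproducts, because the bound depends on $X$ and not only on $C$. Concretely, set $X_k = \Sigma^{-k}\,\i R$ for $k \geq 0$; each $X_k$ lies in your class and in $\Loc \K {\i R}$, but $\Hom {\K} {\i R} {\Sigma^n X_k} \cong \Ext {n-k} R R R$ is nonzero precisely when $n = k$, so $\bigoplus_{k \geq 0} X_k$ has $\Hom {\K} {\i R} {\Sigma^n(-)} \neq 0$ for every $n \geq 0$. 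The same example shows that $\Loc \K {\i R}$ contains objects whose cohomology is unbounded above, so your opening claim that every object of $\Loc \K {\i R}$ has finite injective dimension is simply false. The hypothesis $\H^n(M) = 0$ for $n \gg 0$ therefore cannot be postponed to a final truncation step; it must enter at the start, and your justification that the bound ``depends only on $C$'' is circular.

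The paper's proof does exactly this. Using the bounded-above cohomology of $M$, one first builds a bounded-above complex of injectives $L$ quasi-isomorphic to $M$: take a bounded-above projective resolution $P \xra{\simeq} M$, resolve each $P^j$ injectively to length at most $\id_R R$, and totalize. Choosing a semi-injective resolution $L \to \i L \cong \i M$, the cone $T$ is acyclic, and the localizing argument is run \emph{contravariantly}: the class $\{X : \Homs \K X T = 0\}$ is localizing because $\Hom {\K} - T$ sends coproducts to products, it contains $\i R$ since $\Homs \K {\i R} T \cong \H^*(T) = 0$, and hence it contains $\i L$. Thus the map $\i L \to T$ is nullhomotopic, and unwinding the nullhomotopy produces a splitting of the differential of $\i L$ in high degrees, from which finite injective dimension follows. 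The structural point you missed is that a class cut out by maps \emph{into} a fixed target is automatically localizing, whereas one cut out by vanishing of maps \emph{from} compacts with a floating degree bound is not.
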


\begin{proof}
Since $M$ has right bounded cohomology, we may pick a projective
resolution $P \xra{\simeq} M$, i.e.\ a quasi-isomorphism such that $P^j$ is
projective and $P^j = 0$ for $j \gg 0$. Each $P^j$ has finite
injective dimension bounded by the injective dimension of the
ring, which we denote by $d$. 

Fix an injective resolution of each $P^j$ of length at most $d$. By the
comparison theorem there are maps between the resolutions which form a
bicomplex. Taking the total sum complex of this bicomplex gives a
complex $L$ and a quasi-isomorphism $P \xra{\simeq} L$ such that each $L^j$ is
injective and $L^j = 0$ for
$j \gg 0$. Now let $L \to \i L$ be a semi-injective resolution. We
have a diagram
\[ \xymatrix{ P \ar[r]^\simeq \ar[dr]^\simeq & L \ar[r]^{\simeq} & \i L \\
& M \ar[r]^\simeq & \i M } \]
By the lifting property of semi-injective resolutions, described in
\ref{def_semi_inj}, we see that $\i M \cong \i L$ in $\Kinj$. In
particular $\i L$ is a semi-injective resolution of $M$ and
$\i L \in \Loc \K {\i R}.$

Let $T$ be the mapping cone of $L \to \i L$. We have a triangle
\[ L \to \i L \xra{v} T \to \]
in $\Kinj$. Note that $T$ is acyclic since $L \to \i L$ is a
quasi-isomorphism. Thus, we have isomorphisms \[\Homs \K {\i R} T \cong \Homs {\mathsf{K}(R)} R T \cong \H^*(T) = 0.\] The first is \cite[2.1]{Kr05}, the second is clear, and the third is the fact that $T$ is acyclic.

The full subcategory whose objects are
\[ \brm{X}{\Homs \K X T = 0 }\]
is a localizing subcategory of $\Kinj$. Thus, since $\i R$ is in this subcategory,
so is $\Loc \K {\i R}$. In particular $\i L \in \Loc
\K {\i R}$, and thus $\Homs \K {\i L} T = 0.$ This shows that the map $v$ above is nullhomotopic. We will show that this forces $\i L$ to have an injective cokernel in a high degree.

Since $v$ is nullhomotopic there exists a map $s: \i L \to T$ such that $\partial s + s \partial = v$. Let $k$ be an integer such that $L^n = 0$ for all $n \geq k$, which exists by assumption. Thus $v^n$ is bijective for all $n \geq k$ and we have that $(v^n)^{-1} \partial s + (v^n)^{-1}  s \partial = 1_{\i L^{n}}$. One checks that $v^{-1}$ commutes with the differentials in the degrees for which it is defined; this gives
\[ \partial (v^{n-1})^{-1} s + (v^n)^{-1}  s \partial = 1_{\i L^{n}}. \]
Thus $v^{-1} s$ is a contracting homotopy of $1_{\i M}$ in high degrees. A simple diagram chase now shows that $\Img( \partial^{k})$ splits as a submodule of $(\i L)^{k+1}$ and hence is injective.

Since $v$ is a bijection in degrees $n \geq k$ and $T$ is acyclic, this
implies that $\H^n(\i L ) = 0$ for $n \geq k$. Thus $\i L$ has an
injective cokernel in a degree higher than its last nonzero
cohomology; by \cite[2.4.I]{AvFo91} this implies that $M$ has finite
injective dimension. One may also verify this directly by noting that
we've shown that $\i L \cong X \oplus Y$ with $X^i = 0$ for $i \gg 0$
and $Y$ nullhomotopic.
\end{proof}

\begin{theorem}
\label{main_thm}
Let $R$ be an associative ring and $\S$ a Noetherian graded ring of
finite Krull dimension. Assume that $\S$ is a ring of cohomology
operators on $R$ and that $R$ has Noetherian cohomology over $S$. For an $R$-complex $M$ with $\H^n(M) = 0$ for $n \gg 0$, if the $\S$-module $\Ext * R M M$ is $\S^+ = \oplus_{i \geq 1} \S^i$-torsion, then $M$ has finite injective dimension. 
\end{theorem}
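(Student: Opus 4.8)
The plan is to reduce Theorem~\ref{main_thm} to Proposition~\ref{Locid} by showing that the semi-injective resolution $\i M$ lies in $\Loc\K{\i R}$, using the torsion hypothesis on $\Ext * R M M$ together with Proposition~\ref{koz_pd}. Note first that $R$ has Noetherian cohomology over $\S$, so by \ref{fin_pd} the ring $R$ is left-Noetherian and has finite injective dimension as a left module over itself; hence Proposition~\ref{Locid} applies once its hypothesis is verified. So everything comes down to proving $\i M \in \Loc\K{\i R}$.

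To do this, set $\be = s_1, \dots, s_r$ to be a set of homogeneous generators of $\S^+$, as in Proposition~\ref{koz_pd}. First I would like to understand $\i M \koz \be$. By \ref{koz_ob}\eqref{koz_ann} there is an integer $n$ with $(\be)^n \Homs\K {\i M} {\i M \koz \be} = 0$, i.e.\ $\Homs\K {\i M} {\i M \koz \be}$ is $\S^+$-torsion; and dually for $\Homs\K{\i M \koz \be}{\i M}$. Combined with the hypothesis that $\Ext * R M M \cong \Homs\K{\i M}{\i M}$ is $\S^+$-torsion, an induction up the defining triangles \eqref{koz_tri} for the Koszul object shows that $\Homs\K{\i M \koz \be}{\i M \koz \be}$ is $\S^+$-torsion as well. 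But $\be$ generates $\S^+$, so $\zeta_{\i M \koz \be}(s_i)$ acts nilpotently; feeding the last Koszul element $s_r$ through \ref{koz_ob}\eqref{koz_ann} and \eqref{koz_tor} — using that $\Homs\K{(\i M \koz \be) \koz s_r}{\,\cdot\,}$ vanishes appropriately — one concludes that $\i M \koz \be \simeq 0$ in $\Kinj$. Thus the triangle
\[ \G(\i M) \to \i M \to \Ls(\i M) \to \]
obtained from Bousfield localization \ref{bos_loc} with respect to $\tS = \Loc\K{\,C\koz\be \mid C\in\Kinj^{\mathsf c}\,}$ should collapse in the right way. Concretely, I want to show $\Ls(\i M) = 0$, which by compact generation (\ref{thick_loc_def}) amounts to showing $\Homs\K{C \koz \be}{\Ls(\i M)} = 0$ for every compact $C$; the triangle defining $\Ls(\i M)$ reduces this to computing $\Homs\K{C\koz\be}{\i M}$, and since $C\koz\be$ is compact hence semi-injective, a vanishing argument analogous to the one for $\i M \koz \be$ — now using \ref{koz_ob}\eqref{koz_ann} for the source together with the torsion hypothesis transported across a resolution of $C$ — gives the claim. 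Hence $\i M = \G(\i M) \in \tS = \Loc\K{\,C\koz\be\mid C\in\Kinj^{\mathsf c}\,}$, and by Proposition~\ref{koz_pd} the latter is contained in $\Thick\K{\i R} \subseteq \Loc\K{\i R}$. Therefore $\i M \in \Loc\K{\i R}$, and Proposition~\ref{Locid} finishes the proof.

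The main obstacle I anticipate is the passage from "$\Homs\K{\i M}{\i M}$ is $\S^+$-torsion" to "$\i M \koz \be \simeq 0$" — and more generally controlling the torsion bookkeeping across the iterated Koszul construction. The torsion exponent produced by \ref{koz_ob}\eqref{koz_ann} is uniform in the objects but one still has to chase it consistently through $r$ triangles, and at each stage one needs that $\Homs\K{-}{-}$ between the relevant Koszul objects remains torsion so that \eqref{koz_tor} can be applied with a vanishing hypothesis on the next Koszul object. A clean way to package this is to prove, by induction on $j$, that $\Homs\K{\i M\koz(s_1,\dots,s_j)}{\i M\koz(s_1,\dots,s_j)}$ is $\S^+$-torsion (using that in a triangle $A\to B\to C\to$ with two of the three Hom-modules into a fixed target torsion, so is the third), and then invoke \eqref{koz_tor} at the last step. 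The only other point requiring care is the compact-generation argument for $\Ls(\i M) = 0$: one must observe that the class $\{C\koz\be \mid C \in \Kinj^{\mathsf c}\}$ still generates $\Kinj$ in the relevant sense, or equivalently work directly with $\Homs\K{C\koz\be}{-}$ and note that $\Homs\K C {-}$ detects nonvanishing after the torsion cutoff. Both of these are the kind of argument that \cite{BIK08} carries out in greater generality, so the work here is mainly to extract and streamline the special case.
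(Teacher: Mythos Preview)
Your overall architecture matches the paper's: reduce to Proposition~\ref{Locid}, pass through $\Loc\K{C\koz\be \mid C\in\Kinj^{\mathsf c}}$ via Proposition~\ref{koz_pd}, and use Bousfield localization with respect to that subcategory to show $\i M$ lands there. However, the middle of the argument contains a genuine error and a real gap.

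First, the claim that $\i M \koz \be \simeq 0$ is false. Take $R = k[x]/(x^2)$ and $M = R$, with $\S = k[\chi]$, $|\chi| = 2$. Then $R$ is self-injective, so $\i R = R$ and $\Ext * R R R$ is concentrated in degree~$0$, hence trivially $\S^+$-torsion. But $\zeta_{\i R}(\chi) = 0$, so $\i R \koz \chi$ is the cone of the zero map $R \to \Sigma^2 R$, namely $\Sigma^2 R \oplus \Sigma R \neq 0$. More conceptually, the nilpotence of each $\zeta_X(s_i)$ on $\Homs\K X X$ only tells you that the Koszul triangle \emph{splits after a shift}, not that the cone vanishes; property \ref{koz_ob}\eqref{koz_tor} requires as input that $\Homs\K{X\koz\be}{Y}$ already vanishes, which you have not established. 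This detour is a red herring and should be dropped.

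Second, and more seriously, your argument for $\Ls(\i M) = 0$ is circular as stated. The vanishing $\Homs\K{C\koz\be}{\Ls(\i M)} = 0$ is automatic from $\Ls(\i M)\in\Ss^\perp$ and proves nothing; compact generation requires $\Homs\K D {\Ls(\i M)} = 0$ for every compact $D$, and the objects $C\koz\be$ do not exhaust the compacts. The missing step, which is the actual crux of the paper's proof, is this: by the commutativity relation \eqref{comm_rels}, the $\S$-action on $\Homs\K D {\i M}$ factors through $\Homs\K{\i M}{\i M}$, so $\Homs\K D {\i M}$ is $\S^+$-torsion for \emph{every} object $D$. One then checks that the full subcategory of $Z$ with $\Homs\K D Z$ torsion is localizing and contains each $C\koz\be$, hence contains $\Ss$ and in particular $\G(\i M)$; the Bousfield triangle then forces $\Homs\K D {\Ls(\i M)}$ to be torsion. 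Now combine this with the automatic vanishing $\Homs\K{D\koz\be}{\Ls(\i M)} = 0$ and invoke \ref{koz_ob}\eqref{koz_tor} to get $\Homs\K D {\Ls(\i M)} = 0$. Your phrase ``the torsion hypothesis transported across a resolution of $C$'' does not capture this mechanism; the transport is through the \emph{target}, not the source.
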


\newcommand{\ix}{X}
\newcommand{\Ss}{\C}
\newcommand{\Rs}{\mathsf{T}}

\begin{proof}
Let $X = \i M$ be a semi-injective resolution of $M$. Then, by \ref{ext_inj_dim},
\[\Ext *  R M M \cong \Homs \Kinj X X.\] Let $\be$ be a finite set of generators
of the ideal $\S^+$. By \ref{fin_pd}, $R$ is left-Noetherian and has finite injective dimension. Thus by \ref{Locid} it is enough to show that $\i M \in \Loc \K {\i R}$.
Since every localizing
subcategory in $\Kinj$ is thick, see \ref{thick_loc_def}, Proposition
\ref{koz_pd} shows that
\begin{equation}\Loc \K { C \koz  \be | \, C \in \Kc } \subseteq \Loc \K {\i R} . \label{loc_inclusion}\end{equation}
Thus to prove the theorem it is enough to show that $X \in \Loc \K { C
  \koz \be \, | \, C \in \Kinj^{\mathsf{c}} }.$ Let us set $\Ss := \Loc \K { C \koz \be \, | \, C \in \Kinj^{\mathsf{c}} }.$

Fix a compact object $D$. By hypothesis $\Homs \Kinj {\ix} {\ix}$ is $\S^+$-torsion. By the definition of cohomology operators, the action of $\S$ on $\Homs \Kinj D \ix$ factors through $\Homs {\Kinj} \ix \ix$ and hence $\Homs \Kinj D \ix$ is also $\S^+$-torsion. 

Now consider the full subcategory $\Rs$ of $\Kinj$ with objects those $Z \in \Kinj$ such that $\Homs \Kinj D Z$ is $\S^+$-torsion.
It is clearly closed under suspension. Given a triangle $Y \to Z \to W \to \Sigma Y$ in $\Kinj$ there is an exact sequence of $\S$-modules:
\[ \Homs \Kinj D Y \to \Homs \Kinj D Z \to \Homs \Kinj D W.\]
From this we see that if $\Homs \Kinj D Y$ and $\Homs \Kinj D W$ are $\S^+$-torsion then $\Homs \Kinj D Z$ is as well. This shows that $\Rs$ is triangulated.
For a family of objects $\{ Z_i \}$ in $\T$, we have that
\[ \Homs \Kinj D {\bigoplus_i Z_i} \cong \bigoplus_i \Homs \Kinj D {Z_i},\]
since $D$ is compact. Thus $\Rs$ is closed under direct sums and hence is localizing. By \ref{koz_ob}(2), for every object $C$ the module $\Homs \Kinj D {C \koz \be}$ is $S^+$-torsion. Thus 
\[ \Ss = \Loc \K { C \koz \be \, | \, C \in \Kinj^{\mathsf{c}} } \subseteq \Rs\] since $\Rs$ is localizing and each $C \koz \be$ is in $\Rs$. 

Since $\Ss$ is compactly generated there is a triangle
\begin{equation}
\G \ix \to \ix \to \Ls \ix \to \label{loc_seq} 
\end{equation}
with $\G \ix \in \Ss$ and $\Ls \ix \in \Ss^{\perp}$; see
\ref{bos_loc}. We have that $\Homs \Kinj D {\G \ix}$ is $\S^+$-torsion since $\G \ix
\in \Ss \subseteq \Rs$. We have shown above that $X \in \Rs$. Thus $\Ls X \in \Rs$ since $\Rs$ is triangulated. By definition this means $\Homs \Kinj D {\Ls \ix}$ is $S^+$-torsion. Since $D \koz \be \in \Ss$ and $\Ls \ix \in \Ss^{\perp}$, we have that
\[ \Homs \Kinj {D \koz \be} {\Ls \ix} = 0. \]
 By \ref{koz_ob}(3) this implies that $\Homs \Kinj D {\Ls \ix} =
 0$. But since $D$ was an arbitrary compact object and $\Kinj$ is
 compactly generated, see \ref{thick_loc_def}, this shows that  $\Ls
 \ix = 0$. By the triangle \eqref{loc_seq} this implies that $\G \ix
 \cong \ix \in \Kinj$ and hence $\ix$ is an object of  $\Ss = \Loc \K { C \koz \be \, | \, C \in \Kinj^{\mathsf{c}} }$.
\end{proof}

\begin{remark}
The hypothesis that $\H^n(M) = 0$ for $n \gg 0$ is
necessary. Indeed, from the definition of finite injective dimension,
given in \ref{def_semi_inj}, if a complex
$M$ has finite injective dimension, then $\H^n(M) = 0$ for $n \gg 0$.
\end{remark}

We record the following which was contained in the proof of \ref{main_thm}.
\begin{cor}
\label{equality}
Under the assumptions and notation of Theorem \ref{main_thm}, there is an equality
\[ \Loc \K  {C \koz \be \, | \, C \in \Kinj^{\mathsf{c}} } =\Loc \K {\i R}.\]
\end{cor}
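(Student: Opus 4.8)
The plan is to get the equality by pairing one inclusion that is already contained in the proof of Theorem \ref{main_thm} with the reverse inclusion, which I will obtain by re-running that same proof on the complex $M = R$.

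First I would recall that the containment
\[ \Loc \K {C \koz \be \,|\, C \in \Kinj^{\mathsf{c}}} \subseteq \Loc \K {\i R} \]
is precisely the inclusion \eqref{loc_inclusion} established inside the proof of Theorem \ref{main_thm}; it comes from Proposition \ref{koz_pd} together with the fact, recalled in \ref{thick_loc_def}, that every localizing subcategory of $\Kinj$ is thick. So only the reverse inclusion is in question, and for that it suffices to show that $\i R$ lies in $\Loc \K {C \koz \be \,|\, C \in \Kinj^{\mathsf{c}}}$.

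Next I would check that $M = R$ meets the hypotheses of Theorem \ref{main_thm}. Clearly $\H^n(R) = 0$ for $n \neq 0$, hence for $n \gg 0$. And $\Ext * R R R$ is concentrated in cohomological degree $0$, so for any homogeneous $s \in \S^+$ the element $\zeta_{\i R}(s)$ lies in a positive-degree component of $\Ext * R R R$, which is zero; thus $\S^+$ annihilates $\Ext * R R R$, and in particular $\Ext * R R R$ is $\S^+$-torsion. Now I would run the proof of Theorem \ref{main_thm} verbatim with $X = \i R$: that argument constructs the Bousfield triangle \eqref{loc_seq} and shows $\Ls X = 0$, so $X \cong \G X$, and therefore $\i R \in \Loc \K {C \koz \be \,|\, C \in \Kinj^{\mathsf{c}}}$. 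Consequently $\Loc \K {\i R} \subseteq \Loc \K {C \koz \be \,|\, C \in \Kinj^{\mathsf{c}}}$, which together with the first inclusion yields the stated equality.

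There is no real obstacle here, since the substantive work was already done in Theorem \ref{main_thm} and Proposition \ref{koz_pd}; the only points needing care are the (immediate) verification that $\Ext * R R R$ is $\S^+$-torsion and the observation that the proof of Theorem \ref{main_thm}, beyond giving $\id_R M \li$, in fact places $\i M$ in the localizing subcategory $\Loc \K {C \koz \be \,|\, C \in \Kinj^{\mathsf{c}}}$ — which is exactly the conclusion one wants in the case $M = R$.
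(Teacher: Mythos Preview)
Your proposal is correct and follows essentially the same approach as the paper: one inclusion is \eqref{loc_inclusion}, and for the other you observe that $\Ext * R R R$ is $\S^+$-torsion and then invoke the argument of Theorem \ref{main_thm} with $M = R$ to place $\i R$ in $\Loc \K {C \koz \be \,|\, C \in \Kinj^{\mathsf{c}}}$. Your write-up simply makes explicit the reason $\Ext * R R R$ is $\S^+$-torsion and the fact that the proof of the theorem actually lands $\i M$ in that localizing subcategory, both of which the paper leaves implicit.
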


\begin{proof}
One containment is given by \eqref{loc_inclusion}. For the other
direction, note that since the $\S$-module $\Homs \Kinj {\i R} {\i R}
\cong \Ext * R R R$ is $\S^+$-torsion, the proof of \ref{main_thm} above shows that $\i R \in \Loc \K  {C \koz \be \, | \, C \in \Kinj^{\mathsf{c}} }$.
\end{proof}

\begin{cor}
\label{cor}
Let $R, \S$ and $M$ be as in \ref{main_thm}. Let $s_1, \ldots, s_r$ be a finite set of homogeneous generators of the ideal $\S^+$. Set \[ d := \max \brm{ \deg s_i }{ 1 \leq i \leq r} \text{  and  } l := \operatorname{lcm} \brm{\deg s_i}{ 1 \leq i \leq r}.\]
Then $\id_R M \li$ if one of the following holds:
\begin{enumerate}
\item there exists an integer $n \geq 0$ such that $\Ext {j} R M M = 0$ for all $n \leq j \leq n + d - 1$; or
\item there exists an integer $m \geq 0$ such that $\Ext {ml} R M M = 0$.
\end{enumerate}
\end{cor}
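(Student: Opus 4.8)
The plan is to deduce Corollary \ref{cor} from Theorem \ref{main_thm} by showing that each of the two numerical hypotheses forces the $\S$-module $\Ext{*}RMM$ to be $\S^+$-torsion. Since $\S^+ = (s_1,\dots,s_r)$, an element $x \in \Ext{*}RMM$ is $\S^+$-torsion precisely when some power of the ideal $(s_1,\dots,s_r)$ annihilates $x$; because $\Ext{*}RMM$ is a graded module and the $s_i$ are homogeneous, it is convenient to reduce everything to degreewise statements.

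For part (1), first I would record the elementary fact that if a homogeneous element $s$ of degree $e$ acts on a graded module $N$ and $N^j = 0$ for some $j$, then $s$ kills everything in degree $j - e$, i.e. $s \cdot N^{j-e} = 0$. Now suppose $\Ext{j}RMM = 0$ for all $n \le j \le n+d-1$, where $d = \max_i \deg s_i$. Fix a homogeneous $x \in \Ext{p}RMM$. I want to show $(s_1,\dots,s_r)^N x = 0$ for $N \gg 0$. The key point is that multiplying $x$ by a monomial in the $s_i$ raises its degree by at least $1$ each time (since each $\deg s_i \ge 1$) and by at most $d$ each time. So one can arrange, by a counting/reachability argument, that once the degree has been pushed up past $p$, every monomial of large enough total length lands in a window $[n, n+d-1]$ of forbidden degrees — more precisely, since the reachable set of degrees $\{p + \sum_k \deg s_{i_k}\}$ eventually contains a full run of $d$ consecutive integers (a standard Chicken McNugget / numerical-semigroup observation once all gap-sizes are $\le d$), every sufficiently long monomial has a prefix whose image lies in $[n, n+d-1]$ and is therefore zero. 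Hence $(s_1,\dots,s_r)^N x = 0$ for $N$ depending only on $p$, $n$, $d$, and $\Ext{*}RMM$ is $\S^+$-torsion; Theorem \ref{main_thm} then gives $\id_R M < \infty$.

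For part (2), set $l = \operatorname{lcm}_i \deg s_i$ and suppose $\Ext{ml}RMM = 0$. For each $i$, since $\deg s_i \mid l$, the element $s_i^{\,l/\deg s_i}$ is a homogeneous element of degree exactly $l$. By the elementary fact above applied with $N = \Ext{*}RMM$ and $j = ml$, each such element annihilates $\Ext{(m-1)l}RMM$. Iterating, $s_i^{\,kl/\deg s_i}$ annihilates $\Ext{ml - kl}RMM = \Ext{(m-k)l}RMM$ for each $k$; combining over all $i$ and chaining downward shows that a suitable power of the ideal $(s_1,\dots,s_r)$ kills each graded piece $\Ext{p}RMM$ — indeed any homogeneous $x$ of degree $p \le ml$ is killed by a fixed power, and for $p > ml$ one first lowers into range, or alternatively one observes directly that raising the degree of $x$ by multiples of $l$ eventually hits $ml$. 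Either way $\Ext{*}RMM$ is $\S^+$-torsion and Theorem \ref{main_thm} applies.

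The main obstacle is the bookkeeping in part (1): making precise the claim that, given positive integers $e_1,\dots,e_r$ with $\max e_i = d$, the set of nonnegative integer combinations $\sum m_i e_i$ eventually contains $d$ consecutive integers, and then translating "monomial of length $\ge N$" into "some prefix has degree in the length-$d$ forbidden window." This is genuinely elementary — it is essentially the statement that a numerical semigroup generated by $e_1,\dots,e_r$ with $\gcd$-free... more simply, that once you can reach some integer you can reach all larger integers differing by steps of size at most $d$, so any interval of length $d$ past the Frobenius-type bound is fully covered — but it needs to be stated carefully to get a uniform bound on $N$. I would isolate it as a short combinatorial sublemma so that the reduction to Theorem \ref{main_thm} stays clean. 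Part (2) is comparatively routine, being just the single-element observation applied $r$ times.
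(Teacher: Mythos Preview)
Your reduction to Theorem \ref{main_thm} is exactly right, and the paper does the same. The gap is in the combinatorics. You track the degree of $(\text{monomial})\cdot x$, which starts at $p$ and only increases; your prefix argument in (1) and your chaining argument in (2) both require that degree to pass through the forbidden window $[n,n+d-1]$ (respectively, to hit $ml$). This works when $p$ lies below the window, but fails outright when $p > n+d-1$ (respectively $p > ml$): the degrees never re-enter the window, and the hypothesis says nothing about $\Ext j R M M$ for $j$ above it. Your proposed fixes for $p > ml$ in part (2) --- ``lower into range'' or ``eventually hit $ml$ by adding multiples of $l$'' --- cannot work for this reason. The numerical-semigroup remark is also a red herring: it would produce some run of $d$ consecutive reachable degrees, not the specific window $[n,n+d-1]$ where vanishing is assumed.

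The missing observation, which the paper uses and which makes the verification nearly trivial, is that the $\S$-action on $\Ext * R M M$ is through the \emph{ring homomorphism} $\zeta_M \colon \S \to \Ext * R M M$. Hence if $\zeta_M(s)$ lands in a vanishing graded piece, then $s$ annihilates the entire module, not just one degree. For (1) the paper picks, for each $i$, an integer $k_i$ with $k_i\deg s_i \in [n,n+d-1]$, so that $\zeta_M(s_i^{k_i}) = 0$; a pigeonhole on exponents then gives $(\S^+)^{\sum k_i}\cdot\Ext * R M M = 0$. For (2) one has $\zeta_M\bigl(s_i^{\,ml/\deg s_i}\bigr) \in \Ext {ml} R M M = 0$ and the same pigeonhole finishes. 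Your prefix idea can in fact be salvaged this way: look at $\zeta_M(\text{prefix})$ rather than $(\text{prefix})\cdot x$; the step-size-at-most-$d$ observation then shows that every monomial of length at least $n$ has a prefix sent to zero by $\zeta_M$, with no need for $x$, $p$, or Frobenius numbers.
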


\begin{proof}
Either condition forces the $\S$-module $\Ext * R M M$ to be
$\S^+$-torsion. Indeed, assume that there exists an integer $n$ such
that (1) holds. For every $i$ there exists an integer $k_i$ such that
\[n \leq k_i(\deg s_i) \leq n + d -1.\]
One way to see this is by induction on $n$. Consider the ideal $(\S^+)^{k_1+\ldots+k_r } = (s_1, \ldots, s_r)^{k_1+\ldots+k_r}$ in $\S$. It is generated by monomials in the $s_i$ of the form $s_1^{n_1}\ldots s_r^{n_r}$ for positive integers $n_i$ with $\sum n_i = \sum k_i$. For each such monomial there exists an $i$ such that $n_i \geq k_i$, else $\sum n_i < \sum k_i$; applying $\zeta_M$ to the monomial, and using that $\zeta_M$ is a map of rings, we see that 
\begin{align*}\zeta_M( s_1^{n_1}\ldots s_r^{n_r}) &= \zeta_M(s_1^{n_1})...\zeta_M(s_i^{n_i})...\zeta_M(s_r^{n_r}) \\
&= \zeta_M(s_1^{n_1})...\zeta_M(s_i^{k_i})\zeta_M(s_i^{n_i - k_i})...\zeta_M(s_r^{n_r}) = 0\end{align*} since $\zeta_M(s_i^{k_i}) \in \Ext {k_i(\deg s_i)} R M M = 0.$ Thus \[(\S^+)^{k_1+\ldots+k_r }\Ext * R M M = \zeta_M((\S^+)^{k_1+\ldots+k_r }) \Ext * R M M = 0\] and hence $\Ext * R M M$ is $S^+$-torsion. By Theorem \ref{main_thm} this shows that $\id_R M \li$.

To prove $(2)$ assume that such an $m$ exists. For every $i = 1,
\ldots, r$, there exists an integer $d_i$ such that $d_i (\deg s_i) =
l$. Letting $\alpha = m( \sum_i d_i)$, a similar proof as above shows
that $(s_1, \ldots, s_r)^\alpha \Ext *  R M M  = 0$.
\end{proof}

\section{Applications}
\label{apps}
In this section we apply Theorem \ref{main_thm} in the two contexts discussed in the introduction.

\begin{chunk}
Let $R$ be a commutative ring with a presentation \[R \cong Q/(\bf),\]
where $Q$ is a commutative Noetherian regular ring of finite Krull dimension and $(\bf) = (f_1, \ldots, f_c)$ is a $Q$-regular sequence.

\label{opers_gullik_thm}
Let $\S = R[\chi_1, \dots, \chi_c]$ be the polynomial ring in $c$ indeterminates over $R$, graded by setting $|\chi_i| = 2$.
For every $X \in \Kinj$ there is a homomorphism of graded $R$-algebras
\[ \zeta_X : \S \to \Homs \Kinj X X. \]
When $X = \i M$ is the injective resolution of a finitely generated $R$-module $M$, so that \[\Homs \Kinj X X \cong \Ext * R  M M,\] such a map $\zeta_X$ may be constructed as in \cite[Section 1]{Ei80} using a free resolution of $M$. The process described in \cite[Section 1]{Av89}, which replaces free resolutions with injective resolutions, generalizes to arbitrary objects of $\Kinj$. The results of \emph{loc. cit.} show that the maps $\zeta_X$ satisfy the conditions of a ring of cohomology operators.

By \cite[5.1]{AvSu98} the $\S$-module $\Homs \Kinj {\i M} {\i M} \cong
\Ext * R M M$ is finitely generated when $M$ has finite cohomology
over $R$. This was proved first by Gulliksen \cite{Gu74} for
modules. It follows \ref{desc_comp_obs} that $R$ has Noetherian cohomology over $\S$. Restating Theorem \ref{main_thm} in this context, we have:
\begin{cor}
Let $Q$ be a commutative Noetherian regular ring of finite Krull
dimension, $(\bf) = (f_1, \ldots, f_c)$ a $Q$-regular sequence and $R
= Q/(\bf)$. For an $R$-complex $M$ with $H^n(M) = 0$ for all $n \gg
0$, if $\Ext * R M M$ is $\S^+$-torsion, then $M$ has finite injective dimension. \qed
\end{cor}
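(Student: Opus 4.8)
The plan is to verify that the hypotheses of Theorem \ref{main_thm} are met in this situation and then quote that theorem directly; the corollary is a pure specialization, so I expect no genuine obstacle, only bookkeeping.

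First I would fix $\S = R[\chi_1, \dots, \chi_c]$ with $|\chi_i| = 2$, as in \ref{opers_gullik_thm}. Since $R = Q/(\bf)$ is a quotient of the commutative Noetherian ring $Q$ of finite Krull dimension, $R$ is itself Noetherian of finite Krull dimension, and hence so is the graded polynomial extension $\S$ (Hilbert basis theorem, together with $\dim \S = \dim R + c$). Thus $\S$ is a Noetherian graded ring of finite Krull dimension, which is the first requirement of Theorem \ref{main_thm}.

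Next I would recall from \ref{opers_gullik_thm} that the maps $\zeta_X \colon \S \to \Homs \Kinj X X$ constructed there --- built from the Eisenbud operators on a free resolution when $X$ resolves a finitely generated module, and extended to all of $\Kinj$ by the injective-resolution variant of Avramov's construction --- satisfy the axioms \eqref{comm_rels} of a ring of cohomology operators for $R$; this is exactly what the cited results of Eisenbud and Avramov provide. Moreover, by \cite[5.1]{AvSu98} (and \cite{Gu74} in the module case), the $\S$-module $\Homs \Kinj {\i M} {\i M} \cong \Ext * R M M$ is finitely generated whenever $M$ has finite cohomology over $R$. Via the identification in \ref{desc_comp_obs} of the compact objects of $\Kinj$ with the semi-injective resolutions of the objects of $\Df$, this says precisely that $\Homs \Kinj C C$ is a Noetherian $\S$-module for every compact object $C$ of $\Kinj$; that is, $R$ has Noetherian cohomology over $\S$.

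With these facts in hand, $R$ and $\S$ satisfy all the hypotheses of Theorem \ref{main_thm}. Given an $R$-complex $M$ with $H^n(M) = 0$ for $n \gg 0$ such that the $\S$-module $\Ext * R M M$ is $\S^+$-torsion, Theorem \ref{main_thm} applies verbatim and yields $\id_R M \li$. The only points demanding care are internal to \ref{opers_gullik_thm} --- namely that the cohomology operators genuinely live in $\Kinj$ and respect the sign convention in \eqref{comm_rels} --- and these are supplied by the references cited there.
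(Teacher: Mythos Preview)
Your proposal is correct and matches the paper's approach exactly: the corollary is stated with a \qed and no separate proof, since the preceding discussion in \ref{opers_gullik_thm} already establishes that $\S = R[\chi_1,\dots,\chi_c]$ is a Noetherian ring of cohomology operators of finite Krull dimension over which $R$ has Noetherian cohomology, and the result then follows immediately from Theorem \ref{main_thm}. Your write-up simply makes explicit the bookkeeping that the paper leaves implicit.
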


In the notation of Corollary \ref{cor} we see that $d = 2 = l$. Since $R$ is a Gorenstein ring of finite Krull dimension, a module has finite projective dimension if and only if it has finite injective dimension. This gives:
\begin{cor}
If $M$ is an arbitrary $R$-module such that $\Ext {2n} R M M = 0$ for some $n \geq 1$ then $M$ has finite projective dimension. \qed
\end{cor}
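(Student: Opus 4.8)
The plan is to combine Corollary \ref{cor} with the fact that $R$ is Gorenstein. First I would observe that, since each $\chi_i$ has degree $2$, we have $l = \operatorname{lcm}\{\deg\chi_i \mid 1\le i\le c\} = 2$ in the notation of Corollary \ref{cor}; hence the hypothesis $\Ext{2n}{R}{M}{M} = 0$ is precisely condition (2) of that corollary with $m = n$. Since the complete intersection $R$ together with $\S = R[\chi_1,\dots,\chi_c]$ satisfies the hypotheses of Theorem \ref{main_thm} (see \ref{opers_gullik_thm}), and a module, regarded as a complex concentrated in degree $0$, certainly has $\H^n(M) = 0$ for $n \gg 0$, Corollary \ref{cor} applies and yields $\id_R M \li$. (Alternatively one argues directly: $\zeta_M(\chi_i^{\,n})$ lies in $\Ext{2n}{R}{M}{M} = 0$ for each $i$, so, as $\zeta_M$ is a ring homomorphism and $\S^+ = (\chi_1,\dots,\chi_c)$, the ideal $(\S^+)^{nc}$ — generated by the monomials $\chi_1^{a_1}\cdots\chi_c^{a_c}$ with $\sum_i a_i = nc$, each of which has some $a_i \ge n$ — annihilates $\Ext{*}{R}{M}{M}$; in particular this module is $\S^+$-torsion and Theorem \ref{main_thm} gives $\id_R M \li$.)

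It then remains to pass from finite injective dimension to finite projective dimension. Here I would use that $R = Q/(\mathbf{f})$ is Gorenstein of finite Krull dimension $d := \dim R \le \dim Q < \infty$: thus $\id_R R = d < \infty$, and, $R$ being Noetherian, an arbitrary direct sum of injective $R$-modules is injective, so every free $R$-module has injective dimension at most $d$. From $\id_R M \li$ we obtain a finite injective coresolution $0 \to M \to E^0 \to \cdots \to E^s \to 0$; since over a Gorenstein ring of finite Krull dimension every injective module has projective dimension at most $d$, a dimension-shifting argument along this finite coresolution produces $\pd_R M \le d + s < \infty$. This is exactly the equivalence already quoted before the statement, that over such $R$ a module has finite projective dimension if and only if it has finite injective dimension.

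The only point that requires care is that $M$ is allowed to be non-finitely-generated, so one cannot invoke the familiar statement about finitely generated modules over Gorenstein local rings. The needed input is instead the stronger fact — valid for $d$-Gorenstein rings in the sense of Iwanaga, and in particular for a commutative Noetherian Gorenstein ring of Krull dimension $d$ — that finiteness of injective dimension and of projective dimension are equivalent, with common bound $d$, for \emph{every} module. Granting this, the corollary is immediate, and there is no further obstacle.
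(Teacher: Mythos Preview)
Your argument is correct and follows essentially the same route as the paper: observe $l=2$ so Corollary~\ref{cor}(2) yields $\id_R M<\infty$, then pass to finite projective dimension using that $R$ is Gorenstein of finite Krull dimension. Your added care about the non-finitely-generated case (invoking the Iwanaga--Gorenstein equivalence) simply makes explicit what the paper asserts in one line.
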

\begin{remark}
  In \cite[4.2]{AvBu00} the same statement is proved for finitely
  generated modules of finite complete intersection dimension over a
  Noetherian ring. All finitely generated modules over the ring $R$ have finite complete intersection dimension. However, complete intersection dimension is not defined for non-finitely generated modules, so we have not generalized completely \cite[4.2]{AvBu00}.
\end{remark}

\end{chunk}

\begin{chunk}
\label{hopf_alg}
Let $R$ be a Hopf algebra over a field $k$. For two $R$-modules $M, N$
we view $M \otimes_k N$ as an $R$-module via the diagonal map $\Delta:
R \to R \otimes_k R$. When $M, N$ are injective then so is $M
\otimes_k N$. For $X \in \Kinj$ the functor $-\otimes_k X$ preserves
homotopies of maps. Thus there is a functor $-\otimes_k X \col \Kinj
\to \Kinj$. Viewing $k$ as an $R$-module via the augmentation there is an isomorphism
\[\varphi_X: \i k \otimes_k X \xra{\cong} X, \]
see \cite[5.3]{BK} which proof holds in our more general situation.
Thus for each $X$ one gets a map
\[
\eta_X \col \Homs \K {\i k} {\i k} \to \Homs \K X X
\]
that sends $\alpha: \i k \to \Sigma^n \i k$ to \[\varphi_{\Sigma^n X} ( \alpha \otimes_k X) (\varphi_X)^{-1}: X \to \Sigma^n X.\]
\newcommand{\even}{\operatorname{even}}
\newcommand{\sev}{\S^{\even}}
One can check that $\eta_X$ is a ring map. Let $\S$ be the ring $\Ext * R k k \cong \Homs \K {\i k} {\i k}$. By \cite[(VIII.4.7), (VIII.4.3)]{Mac63} the ring $S$ is graded-commutative and the maps $\eta_X$ satisfy the commutativity relations \eqref{comm_rels}. Thus setting
\[ \sev := \left \{ \begin{array}{cr} \bigoplus_{i \geq 0} \Ext {2i} R k k & \text{ if } \operatorname{char} k \neq 2 \\
& \\
    \Ext * R k k & \text{ if } \operatorname{char} k = 2 \end{array} \right . \]
we see that $\sev$ is commutative and is a ring of cohomology operators on $R$.

By the main result of \cite{FS}, when $R$ is cocommutative and finite dimensional over $k$, the ring $\S$ is Noetherian and $\Ext * R M N$ is a Noetherian $\S$-module (via $\eta_M$, or equivalently, $\eta_N$) for all complexes $M, N$ with finite cohomology. The ideal of odd degree elements in $\S$ is nilpotent when $\operatorname{char} k \neq 2$. Thus when $R$ is a cocommutative finite dimensional Hopf algebra it has Noetherian cohomology over $\sev$.

Specializing Theorem \ref{main_thm} and Corollary \ref{cor} to this context, and using that $R$ is self-injective, we have:
\end{chunk}
\begin{cor}
Let $R$ be a finite dimensional cocommutative Hopf algebra and $\S^{\operatorname{even}}$ the commutative ring defined as above. For an $R$-complex $M$ with $H^n(M) = 0$ for all $n \gg
0$, if $\Ext * R M M$ is $\S^+$-torsion, then $M$ has finite injective
dimension. \qed
\end{cor}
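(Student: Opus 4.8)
The plan is to deduce this statement as the special case of Theorem \ref{main_thm} in which the ring of cohomology operators is $\S^{\operatorname{even}}$, so that the only task is to verify the hypotheses of that theorem; essentially everything needed has already been assembled in \ref{hopf_alg}.

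First I would recall the construction of the operators. For each $X \in \Kinj$ the isomorphism $\varphi_X \colon \i k \otimes_k X \xra{\cong} X$ yields a ring homomorphism $\eta_X \colon \Homs \K {\i k} {\i k} \to \Homs \K X X$, and by \cite[(VIII.4.7), (VIII.4.3)]{Mac63} the ring $\S = \Ext * R k k \cong \Homs \K {\i k} {\i k}$ is graded-commutative with the $\eta_X$ satisfying the commutativity relations \eqref{comm_rels}. Restricting the $\eta_X$ to $\S^{\operatorname{even}}$ --- which equals $\S$ when $\operatorname{char} k = 2$ and is the subring of even-degree elements otherwise --- produces a \emph{commutative} graded ring together with a compatible family of maps, that is, a ring of cohomology operators on $R$ in the sense of \ref{coh_noeth}.

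Next I would check the finiteness hypotheses. Because $R$ is cocommutative and finite dimensional over $k$, the main result of \cite{FS} gives that $\S = \Ext * R k k$ is Noetherian and that $\Ext * R C C \cong \Homs \K C C$ is a Noetherian $\S$-module for every compact object $C$ of $\Kinj$ (equivalently, for every $R$-complex with finite cohomology). A graded-commutative Noetherian $k$-algebra has finite Krull dimension, and $\S$ is module-finite over $\S^{\operatorname{even}}$: in characteristic $2$ the two rings coincide, while in odd characteristic the ideal of odd-degree elements of $\S$ is nilpotent, so every odd-degree element is integral over $\S^{\operatorname{even}}$. By the Eakin--Nagata theorem $\S^{\operatorname{even}}$ is then Noetherian and of finite Krull dimension, and any module Noetherian over $\S$ is Noetherian over $\S^{\operatorname{even}}$; in other words $R$ has Noetherian cohomology over $\S^{\operatorname{even}}$. (One also has $\id_R R < \infty$, either because a finite-dimensional Hopf algebra is Frobenius or by \ref{fin_pd}, which is what lets \ref{Locid} be invoked inside the proof of Theorem \ref{main_thm}.) With all hypotheses of Theorem \ref{main_thm} in place for the ring of cohomology operators $\S^{\operatorname{even}}$, the conclusion is immediate: if $M$ is an $R$-complex with $\H^n(M) = 0$ for $n \gg 0$ and $\Ext * R M M$ is torsion with respect to the irrelevant ideal $\S^+ = \bigoplus_{i \geq 1}(\S^{\operatorname{even}})^i$, then $\id_R M < \infty$. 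I expect the real obstacle not to lie in this deduction at all but in the cohomological finiteness theorem of \cite{FS} on which it rests; the only point here that requires any care is the transition from $\S$ to its commutative subring $\S^{\operatorname{even}}$ in odd characteristic.
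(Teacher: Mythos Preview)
Your proposal is correct and follows exactly the route the paper takes: the paper gives no separate proof (the corollary is marked \qed), and the preceding chunk \ref{hopf_alg} does precisely what you describe---construct the operators via $\eta_X$, restrict to $\S^{\operatorname{even}}$, invoke \cite{FS} for Noetherianity, use nilpotence of the odd part to pass from $\S$ to $\S^{\operatorname{even}}$, and then apply Theorem~\ref{main_thm}. The only cosmetic point is that your sentence ``a graded-commutative Noetherian $k$-algebra has finite Krull dimension'' should really read ``a connected graded Noetherian $k$-algebra is finitely generated, hence of finite Krull dimension''; this is the situation here since $\Ext 0 R k k = k$.
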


\begin{cor}
Let $R$ be as above and $M$ an $R$-module. There exists an integer $l$
such that if $\Ext {nl} R M M = 0$ for some $n \geq 1$ then $M$ has
finite projective dimension. \qed
\end{cor}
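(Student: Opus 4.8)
The plan is to read off the integer $l$ from the numerology already isolated in Corollary \ref{cor} and then trade finite injective dimension for finite projective dimension using that $R$ is self-injective. First I would recall, from \ref{hopf_alg}, that since $R$ is finite dimensional and cocommutative it has Noetherian cohomology over the commutative graded ring $\S^{\operatorname{even}}$, and that $\S^{\operatorname{even}}$ is Noetherian of finite Krull dimension and is a ring of cohomology operators on $R$; hence the hypotheses of Theorem \ref{main_thm} and of Corollary \ref{cor} hold with $\S = \S^{\operatorname{even}}$. I would then fix once and for all a finite set of homogeneous generators $s_1, \dots, s_r$ of the ideal $(\S^{\operatorname{even}})^+$ and set $l := \operatorname{lcm}\{\deg s_i : 1 \le i \le r\}$. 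This integer depends only on $R$ together with the chosen generators, not on $M$.

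Next, let $M$ be an $R$-module and suppose $\Ext{nl} R M M = 0$ for some $n \ge 1$. Viewing $M$ as a complex concentrated in degree $0$, one has $\H^j(M) = 0$ for all $j \gg 0$, so Corollary \ref{cor}(2) applies and yields $\id_R M \li$.

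Finally I would pass from finite injective dimension to finite projective dimension using the self-injectivity of $R$ noted in \ref{hopf_alg}. A finite dimensional Hopf algebra over a field is a Frobenius algebra, hence self-injective, and being Artinian it is \emph{quasi-Frobenius}. Over a quasi-Frobenius ring the projective modules coincide with the injective modules, and consequently a module has finite projective dimension if and only if it has finite injective dimension, each being equivalent to the module being projective. Therefore $\pd_R M \li$, which is the assertion.

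The only step that calls for any care is this last one, the equivalence of finite injective and finite projective dimension over $R$; but this is the standard homological fact about quasi-Frobenius rings (it was already invoked in the complete intersection setting of \ref{opers_gullik_thm}), so once self-injectivity of $R$ is recorded there is no genuine obstacle. Everything else is a direct specialization of Corollary \ref{cor}.
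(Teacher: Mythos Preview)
Your proposal is correct and matches the paper's approach exactly: the paper presents this corollary with a \qed and no written proof, having already noted just before it that one specializes Theorem \ref{main_thm} and Corollary \ref{cor} to this context and uses that $R$ is self-injective. Your argument simply unpacks those two sentences --- choosing $l$ as in Corollary \ref{cor}(2) and converting finite injective dimension to finite projective dimension via the quasi-Frobenius property --- which is precisely what the paper intends.
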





\renewcommand{\baselinestretch}{1.1}
\renewcommand{\MR}[1]{%
  {\href{http://www.ams.org/mathscinet-getitem?mr=#1}{MR #1}}}
\providecommand{\bysame}{\leavevmode\hbox to3em{\hrulefill}\thinspace}
\newcommand{\arXiv}[1]{%
  \relax\ifhmode\unskip\space\fi\href{http://arxiv.org/abs/#1}{arXiv:#1}}

\newcommand{\etalchar}[1]{$^{#1}$}
\providecommand{\bysame}{\leavevmode\hbox to3em{\hrulefill}\thinspace}
\providecommand{\MR}{\relax\ifhmode\unskip\space\fi MR }
\providecommand{\MRhref}[2]{%
  \href{http://www.ams.org/mathscinet-getitem?mr=#1}{#2}
}
\providecommand{\href}[2]{#2}

\end{document}